\theoremstyle{plain}%default one (italics)
\newtheorem{theorem}{Theorem}[section]
\newtheorem{lemma}[theorem]{Lemma}
\newtheorem{proposition}[theorem]{Proposition}
\theoremstyle{definition} %roman body text
\newtheorem{definition}[theorem]{Definition}
\newtheorem{remark}[theorem]{Remark}
\newtheorem{remarks}[theorem]{Remarks}
\newtheorem{example}[theorem]{Example}
\def\square{\Box}
\begin{document}
\title{Equivalence of differential equations of order one}

\author[a]{L. X. Chau Ngo}
\author[b]{K. A. Nguyen}
\author[c]{M. van der Put}
\author[d]{J. Top}

\affil[a]{\footnotesize{Quy Nhon University, Department of Mathematics\\

170 An Duong Vuong Street, Quy Nhon City, Binh Dinh Province,Vietnam\\

Email: \url{ngolamxuanchau@qnu.edu.vn}\\

$\ $}}

\affil[b]{\footnotesize{HCMC University of Technology (HUTECH), Department of Computer Science\\

144/24 Dien Bien Phu Str., Ward 25, Binh Thanh Dist., Ho Chi Minh City, Vietnam\\

Email: \url{na.khuong@hutech.edu.vn}\\

$\ $}}

\affil[c,d]{\footnotesize{University of Groningen, Department of Mathematics\\

 P.O. Box 407, 9700 AK, Groningen, The Netherlands\\

Email: \url{mvdput@math.rug.nl}, \url{j.top@rug.nl}}}

\date{}
\maketitle
\begin{abstract}
The notions of equivalence and strict equivalence for order one differential equations of the form
$f(y',y,z)=0$ are introduced. The more explicit notion of strict equivalence is applied to examples and questions
concerning  autonomous equations and equations having the Painlev\'e property. The order one equation $f$
determines an algebraic curve $X$ over $\mathbb{C}(z)$. If $X$ has genus 0 or 1, then it is difficult to verify strict equivalence. However, for higher genus strict equivalence can be tested by an algorithm sketched in the text.  
For autonomous equations, testing strict equivalence and the existence of algebraic solutions are
shown to be algorithmic.
\end{abstract}

\smallskip
\noindent \textbf{Keywords.}
Ordinary differential equations: algebraic curves, local behavior of solutions, normal forms, Painlev\'e property, algebraic solutions.\\

\noindent{\bf MSC2010.} 34M15, 34M35, 34M55

\section{Equivalence}
For an irreducible   polynomial $f:=f(S,T,z)\in \mathbb{C}(z)[S,T]$, 
we consider the order one differential equation $f(y',y,z)=0$, where $y'=\frac{dy}{dz}$. The special case that $S$ is not present in $f$ is not really a differential equation and the solutions are algebraic over $\mathbb{C}(z)$. The other special case, namely $T$ is not present in $f$, is still a differential equation. The solutions are the integrals of
finitely many functions which are algebraic over $\mathbb{C}(z)$ (compare \cite{Bro}, \cite{Tra}).  We will exclude these special cases.

In the sequel we will also consider finite extensions $K$ of $\mathbb{C}(z)$, equipped with the unique extension of $\frac{d}{dz}$ to $K$ (which we
also denote as $\frac{d}{dz}$).   Moreover, we will suppose that $f\in K[S,T]$ is absolutely irreducible.\\

For some order one differential equations, like the Riccati equation $y'=ay^2+by+c$, it is easy to describe the solutions. For general $f$ it is difficult to find any information on the solutions and equivalence of equations is a basic theme.

An intuitive way of describing that two such order one differential equation $f_1$ and $f_2$ are equivalent is the existence of an algebraic procedure to obtain from a solution of $f_1$ a solution (or finitely many solutions) of $f_2$ and vice versa.

 In order to make this more precise we have to define what a solution $y$ of $f\in K[S,T]$ is.  First we observe that a solution $y$ of $f(y',y,z)=0$ 
which  is also a solution of $\frac{\partial f }{\partial S }(y',y,z)=0$,
is algebraic over $K$ since the ideal $(f,\frac{\partial f}{\partial S})\subset 
K[S,T]$ has finite codimension as a $K$-vector space. These solutions are easily  computed. Therefore we restrict to solutions $y$ of $f$ such that    $\frac{\partial f }{\partial S }(y',y,z)\neq 0$.\\

We consider the algebra $K[s,t]:=K[S,T]/(f)$ and try to make this into a differential algebra by the derivation $z'=1,t'=s$. Then the derivative $s'$ of $s$ is obtained by differentiation of $0=f(s,t,z)$. Thus
\[0=s'\cdot\frac{\partial f}{\partial s} +s\cdot \frac{\partial f}{\partial t} +\frac{\partial f}{\partial z},\] and we will  restrict to the case that 
\[ d:=\frac{\partial f}{\partial s}
\] is invertible. Then $K[s,t,\frac{1}{d}]$ is a differential algebra.  We note that $\frac{\partial f}{\partial S}$ is called the `separant'  and that the above differential algebra coincides with the `generic solution' of $f$ in the terminology of  \cite{Ri}, p. 129-131.   
In \cite{K}, \S 16 of Chapter IV, related material is considered.

A solution of $f$ is supposed to be algebraic over the field $Mer(U)$ of the meromorphic functions on the universal covering $U$ of an open connected subset of the Riemann surface of $K$. Let $Mer(U)^{a}$ denote the algebraic closure
of the field $Mer(U)$. The differentiation $\frac{d}{dz}$ on $K$  extends uniquely to a differentiation on $Mer(U)^a$.
A {\it solution of $f$} is  a $K$-linear
 homomorphism $$\phi :K[s,t,\frac{1}{d}]\rightarrow Mer(U)^a$$ commuting with differentiation (and $y:=\phi (t)$ is the actual solution).\\

 A variant of the above is the notion of {\it local solution}. This is a $K$-linear differential  homomorphism $$\phi :K[s,t,\frac{1}{d}]\rightarrow \mathbb{C}(\{v^{1/m}\}).$$ The latter is the field of the convergent Laurent series in the variable $v^{1/m}$, where $v$ is a local variable of a
point of the Riemann surface of $K$ and $m\in{\mathbb Z}_{\geq 1}$. Of course a local solution $\phi$ extends to  a solution in $Mer(U)^a$, where $U$ is a small disk around 
a point of the Riemann surface of $K$. On the other hand a solution in  some $Mer(U)^a$ induces local solutions at the points of $U$.  In the following, the precise definition of
solution does not play a role. However, in contrast to \cite{Ri} and \cite{K}, we have chosen for a concrete definition of solution.\\

 In the case that $K[s,t,\frac{1}{d}]$ has only trivial differential ideals, its field of fractions has $\mathbb{C}$ as field of constants (see \cite{vdP-S}) and $\ker \phi =0$. If $\ker \phi \neq 0$, then   $\ker \phi$ is a maximal ideal of $K[s,t,\frac{1}{d}]$. The solution $y=\phi(t)$ is algebraic over $\mathbb{C}(z)$ and is considered as an element of $Mer(U)^a$. There are very few equations admitting an algebraic solution. \\

{\it  It seems to be an open problem whether there exists an algorithm testing the existence of (and computing) algebraic solutions for 
a first order differential equation}.\\
Let $f$ be an order 1 equation. For positive integers $d,n$
one considers algebraic elements $y$ satisfying an equation 
$a_dy^d+a_{d-1}y^{d-1}+\cdots +a_0=0$, where the $a_d,\dots ,a_0$
are polynomials of degree $\leq n$. The coefficients of the $a_j$ are 
seen as variables. Differentiation of this identity yields an expression for
$y'$. The substitution $f(y',y,z)=0$ produces a set of polynomial equations in many variables (over a computable subfield of $\mathbb{C}$).  Gr\"obner  theory provides an algorithm for solving this.
Thus the problem of finding algebraic solutions is `recursive enumerable'.
Missing for a true algorithm are a priori estimates for $d,n$.\\ 

{\it For special cases, there are estimates for $d,n$}. Here are some examples.\\ The solutions
of the  autonomous equation $y'=R(y)$, with $R(y)\in \mathbb{C}(y)$, 
satisfy $\int \frac{dy}{R(y)}=z+c$. The Risch algorithm finds 
the algebraic solutions (if any). An equation like $(y')^2=y^5+1$ yields
Abelian integrals which are transcendental.

 For a Riccati equation $y'+ay^2+by+c=0$ with $a,b,c\in \mathbb{C}(z)$,  Kovacic's algorithm tests the existence and computes possible algebraic solutions. This is done by computing local solutions at the singular points and the observation that a solution $y$ can only have 
poles of order one and integer residue at the non singular points
of the equation.  The above equation has PP, the Painlev\'e property (see \S~\ref{twee} below). 
For every equation with PP there is an algorithm for finding
 algebraic solutions.\\ 

 In contrast to the above, we do not know whether a simple equation like
 $y'=y^3+z$ over $\mathbb{C}(z)$ has algebraic solutions.   The local solutions are:\\
For $z=a \neq \infty $ there is a holomorphic solution $y\in \mathbb{C}\{z-a\}$, depending on the initial value $y(a)$. Moreover there is a  ramified meromorphic solution $y=\sum _{n\geq -1} a_n(z-a)^{n/2}$ in $\mathbb{C}(\{(z-a)\})$, depending on $a_{-1}$ and $a_{-1}^2=\frac{-1}{2}$. \\ 
For $z=\infty$ and with $t:=\frac{1}{z}$ the equation reads 
$\frac{dy}{dt}=-t^2y^3-t^3$. The solutions are $y=\sum _{n\geq -1}
c_nt^{n/3}$ in $\mathbb{C}(\{t^{1/3}\})$, depending on $c_{-1}$ and
$c_{-1}^3=-1$. 

An algebraic solution $y$ has to be ramified at $z=\infty$ of order 3
(and thus $y$ is not rational) and is ramified at some more points with ramification of order 2. However we have no idea what the other ramification points for $y$ could be and what the degree of $y$ over
$\mathbb{C}(z)$ could be. \\

\bigskip

 A criterion for the existence of {\it generic algebraic solutions} is proposed in \cite{A-C-F-G}. For autonomous equations the above criterion leads to an algorithm.  For a first order differential equation $f$, a generic algebraic  solution is a 1-parameter family $\{y_c\}$ of algebraic solutions such that $f$ is the minimal equation for this family. For example, the equation $y'=y^5$ has the generic solution  $y_c^4=\frac{-1}{4z+c}$. First order equations with a generic algebraic solution are very rare.

 \begin{definition}[\textit{Equivalent equations}]\label{Equi_eqns} An {\it equivalence} between equations $f_1$ and $f_2$ is  given by a $\mathbb{C}(z)$-linear differential isomorphism $\psi : F_1\rightarrow F_2$, where for  $j=1,2$, the differential field $F_j$ is a finite extension of the field of fractions of  $\mathbb{C}(z)[S,T,\frac{1}{d_j}]/(f_j)$.
\end{definition}

It is easily seen that the above definition induces an equivalence relation. Let $f_1$ and $f_2$ be
equivalent. Fix $\psi$.  Let $y$ be a solution for $f_1$, given by $$\phi :\mathbb{C}(z)[S,T,\frac{1}{d_1}]/(f_1)\rightarrow Mer(U)^a.$$
Then $\phi$ extends to the field of fractions of $\mathbb{C}(z)[S,T,\frac{1}{d_1}]/(f_1)$ and has finitely many extensions $\phi _1,\dots ,\phi _r$ to $F_1$. The restriction of
$$F_2\stackrel{\psi}{\rightarrow} F_1\stackrel{\phi_j}{\rightarrow} Mer(U)^a$$ to
 $\mathbb{C}(z)[S,T,\frac{1}{d_2}]/(f_2)$ is a solution of $f_2$.

 We conclude that the above definition of equivalence is a way to make the intuitive notion explicit.  It seems rather difficult to decide for explicit $f_1$ and $f_2$ whether they are equivalent. Therefore we  introduce the following notion.

 \begin{definition}[\textit{Strictly equivalent equations}]\label{Strict_qui_eqns} The equations $f_1$ and $f_2$ are {\it strictly equivalent} if there is a finite extension $K$ of $\mathbb{C}(z)$ and a $K$-linear differential isomorphism $\psi$ between the fields of fractions of $K[S,T,\frac{1}{d_1}]/(f_1)$ and $K[S,T,\frac{1}{d_2}]/(f_2)$.
\end{definition}

\begin{remarks}
(i). The $\psi$ in Definition~\ref{Strict_qui_eqns} need not be unique. Indeed, two distinct $\psi$'s differ by a $K$-linear differential automorphism of the field of fractions of $K[S,T,\frac{1}{d_1}]/(f_1)$. The group of the differential automorphism induce a group of permutations of the solutions of $f_1$.\\
(ii). We note that Definitions~\ref{Equi_eqns} and \ref{Strict_qui_eqns} extend in an obvious way to equations $f\in K[S,T]$, where $K$ is
any differential field with field of constants $\mathbb{C}$.\\
(iii). In the sequel we will study strict equivalence for order one differential equations using well known properties of algebraic curves.
\end{remarks}
\section{The Painlev\'e property}\label{twee}

An ordinary differential equation on the complex plane is said to have the \textit{Painlev\'e property} (PP for short) if there are no other moving singularities than poles.

The Painlev\'e property for order one equations has been analysed in detail in \cite{G-vdP}. The reasoning and the results are as follows.

\noindent (1). Observation: {\it If the order one equation $f$ has PP, then $f$ has only finitely many  branched solutions}.\\
 A  branched solution is a solution $\phi : \mathbb{C}(z)[s,t,\frac{1}{d}]\rightarrow \mathbb{C}(\{(z-a)^{1/m}\})$ with $m>1$ and such that $y=\phi (t)$ is not contained in $\mathbb{C}(\{z-a\})$.\\
(2). The field of fractions $F$ of $\mathbb{C}(z)[s,t,\frac{1}{d}]$ is the function field of a smooth projective curve $X$ over $\mathbb{C}(z)$. Let $D$ denote the differentiation on $F$.\\ {\it If the equation $f$ has only finitely many branch points, then every local ring $O_{X,Q}$ (where $Q$ is a closed point) is invariant under $D$}.\\
(3). {\it Suppose that every local ring $O_{X,Q}$ is invariant under $D$. Then there exists a finite extension
$K\supset \mathbb{C}(z)$ and a smooth, connected curve $X_0$ over $\mathbb{C}$, such that
$K\times _{\mathbb{C}(z)}X\cong K\times _{\mathbb{C}}X_0$}. Moreover:\\
 (i) If $X_0\cong \mathbb{P}^1_{\mathbb{C}}$, then $f$ is strictly equivalent to $y'= a_0+a_1y+a_2y^2$ with
 $a_0,a_1,a_2\in K$, not all zero.\\
 (ii) If $X_0$ has genus 1 and equation $y^2=x^3+ax+b$, then $f$ is strictly equivalent to $(y')^2=h\cdot (y^3+ay+b)$
 for some $h\in K^*$.\\
 (iii) If $X_0$ has genus $\geq 2$, then $f$ is strictly equivalent to $y'=0$.\\
(4). Finally, the cases (i)--(iii) have PP.\\

From (1)--(4) one deduces the following.
\begin{proposition}
Let $f_1$ and $f_2$ be strictly equivalent. Then $f_1$ has PP if and only if $f_2$ has PP.
\end{proposition}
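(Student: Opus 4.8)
The plan is to isolate a geometric invariant of the pair $(X,D)$ that detects PP and is manifestly preserved by a strict equivalence. The natural candidate, supplied by the analysis (1)--(4), is the property (P): \emph{every local ring $O_{X,Q}$ is invariant under $D$}. Reading (1)--(4) as a cycle of implications shows that, for an order one equation $f$ with smooth projective model $(X,D)$, the following are equivalent: $f$ has PP; $f$ has only finitely many branched solutions; (P) holds; and $K\times_{\mathbb{C}(z)}X\cong K\times_{\mathbb{C}}X_0$ as in (3), from which PP for $f$ follows via the explicit normal forms (i)--(iii). Here (3) gives the structural isomorphism for the curve of $f$ itself, and (4) verifies in each genus case that this structure yields PP, so the implication ``(P)$\Rightarrow$PP'' is legitimate and involves no circularity. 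The first step is therefore to record this equivalence, taking care that it is valid not only over $\mathbb{C}(z)$ but over any finite extension, as permitted by Remark (ii).

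Next I would unwind the definition of strict equivalence geometrically. A $K$-linear differential isomorphism $\psi$ between the fraction fields $F_1,F_2$ is an isomorphism of the function fields of the $K$-models $X_1,X_2$ intertwining $D_1$ and $D_2$; hence it induces an isomorphism of curves $X_1\cong X_2$ over $K$ carrying $D_1$ to $D_2$. Such an isomorphism sends local rings to local rings and commutes with the derivations, so property (P) holds for $(X_1,D_1)$ if and only if it holds for $(X_2,D_2)$. Equivalently, $\psi$ restricts to a bijection between the places of $F_1$ and of $F_2$ lying over each place of $K$, preserving the ramification index and the differential condition, and therefore matches branched solutions of $f_1$ with branched solutions of $f_2$; in particular one equation has finitely many branched solutions precisely when the other does. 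Combining the two steps yields the conclusion: $f_1$ has PP $\iff$ (P) holds for $X_1$ $\iff$ (P) holds for $X_2$ $\iff$ $f_2$ has PP.

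The one point requiring genuine care---and the step I expect to be the main obstacle---is the compatibility with the finite extension $K$ built into the definition of strict equivalence: I must check that whether $f$ has PP is unchanged when $f$ is viewed over $K$ rather than over $\mathbb{C}(z)$. This is plausible because the covering of Riemann surfaces attached to $K/\mathbb{C}(z)$ neither creates nor destroys movable branch points, only permuting and relabelling the finitely many fixed singular points. Making this precise is exactly where one uses that property (P) is intrinsic to $(X,D)$ and stable under base change to $K$, so that the detection of PP can be transported along $\psi$; the rest of the argument is the formal bookkeeping of the equivalences above.
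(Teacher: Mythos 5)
Your proposal is correct and is essentially the paper's own argument: the paper proves the proposition simply by the phrase ``From (1)--(4) one deduces the following,'' i.e.\ by exactly the deduction you reconstruct --- PP is equivalent to the intrinsic property (P) of the pair $(X,D)$ via the cycle (1)--(4), and a strict equivalence is an isomorphism of such pairs over a finite extension $K$, hence preserves (P). Your explicit treatment of the two points the paper leaves implicit --- that (P) is stable under base change from $\mathbb{C}(z)$ to $K$ (localizations of $D$-stable rings are $D$-stable, and $O_{X,Q}$ is recovered as an intersection of the local rings above it), and that reading (4) as ``any equation with the structure of (3) has PP'' (as established in the cited reference) avoids circularity --- is exactly the right bookkeeping and matches the intended proof.
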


An order one equation $f$ is called {\it autonomous} if $f$ is an irreducible element of $\mathbb{C}[S,T]$. A rather difficult question is \textit{whether a given $f$ is strictly equivalent to an autonomous equation.} Let $X$ denote the smooth connected curve over $\mathbb{C}(z)$ such that its function field is the field of fractions of $\mathbb{C}(z)[s,t,\frac{1}{d}]$. We will call $f$ {\it semi-autonomous} if
$K\times _{\mathbb{C}(z)}X\cong K\times _\mathbb{C}X_0$ for some curve $X_0$ over $\mathbb{C}$ and some finite extension $K$ of $\mathbb{C}(z)$. The curve $X$ over $\mathbb{C}(z)$ can be interpreted
as a surface with a  projection  to $\mathbb{P}^1_\mathbb{C}$. In other
words, $X$ has the interpretation of a family of curves over $\mathbb{C}$.
The condition `semi-autonomous' is identical to `$X$ is an isotrivial
family of curves'. 

In the next sections we intend to treat the following items:\\
(i)  The existence of an algorithm deciding whether two curves 
$X_1, X_2$ over a finite extension $K$ of $\mathbb{C}(z)$ become 
isomorphic after a finite extension $L$ of $K$. This includes deciding 
whether a given first order equation is semi-autonomous.\\
(ii)   The existence of an algorithm deciding strict equivalence between two
first order differential equations in case the genus is $\geq 2$.\\
(iii)  The question whether  strict equivalence is
undecidable for the cases of genus 0 and 1.
 
\section{Autonomous equations}

We associate to an irreducible autonomous equation $f(y',y)=0$ (we assume that both $y$ and $y'$ are present in $f$) the pair
$(X,D)$ where the complete, irreducible, smooth  curve $X$ 
has function field $\mathbb{C}(y_1,y_0)$ with equation $f(y_1,y_0)=0$
and $D$ is the meromorphic vector field on $X$ determined by 
$D(y_0)=y_1$. 

\begin{lemma}\label{3.1} Every pair $(X,D)$, consisting of a curve $X/\mathbb{C}$ (complete, irreducible, smooth) and a non zero meromorphic vector field $D$ on $X$,  is associated to some autonomous equation $f(y',y)=0$. 
\end{lemma}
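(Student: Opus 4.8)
The plan is to produce generators $y_0,y_1$ of the function field $\mathbb{C}(X)$ with $y_1=D(y_0)$; the minimal polynomial relation $f(y_1,y_0)=0$ between them is then an irreducible autonomous equation, and the pair associated to $f$ is $(X,D)$. Writing $L=\mathbb{C}(X)$, it suffices to exhibit a non-constant $y_0\in L$ with $\mathbb{C}(y_0,D(y_0))=L$. Indeed, setting $y_1:=D(y_0)$ and letting $f\in\mathbb{C}[S,T]$ be the numerator of the minimal polynomial of $y_1$ over $\mathbb{C}(y_0)$, one gets an irreducible $f$ with $\mathbb{C}(y_1,y_0)=L$ and $D(y_0)=y_1$; since a $\mathbb{C}$-derivation of $L$ is determined by its value on a single transcendental element, this is exactly the data defining the association. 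The variable $S$ occurs in $f$ because $\deg_S f=[L:\mathbb{C}(y_0)]\ge 1$, and $T$ occurs because $y_1$ will have a pole (below), so $y_1\notin\mathbb{C}$ and is not algebraic over $\mathbb{C}$.

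The key idea for finding $y_0$ is to force the map $\psi:=(y_0,D(y_0)):X\to\mathbb{P}^1\times\mathbb{P}^1$ to be birational onto its image by a coprimality argument. Since $y_0$ and $D(y_0)$ both factor through the image curve of $\psi$, the degree $d:=[L:\mathbb{C}(y_0,D(y_0))]$ divides both $\deg(y_0)$ and $\deg(D(y_0))$, viewed as maps to $\mathbb{P}^1$. Hence if I can arrange $\deg(y_0)=m$ and $\deg(D(y_0))=m+1$, then $d\mid\gcd(m,m+1)=1$, so $d=1$ and $\mathbb{C}(y_0,D(y_0))=L$.

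To realize these degrees I use local control together with Riemann--Roch. First I pick a closed point $Q$ at which $D$ is regular and non-vanishing; all but finitely many points qualify, since the nonzero $D$ has finitely many zeros and poles. In a local parameter $u$ at $Q$ one has $D=\epsilon\,\frac{d}{du}$ with $\epsilon$ a unit. Let $\sum_i k_iR_i$ be the pole divisor of $D$. For $m\gg 0$ I then choose, by Riemann--Roch, a function $y_0$ with pole divisor exactly $mQ$ and with $v_{R_i}(y_0)\ge k_i+1$ for each $i$; such $y_0$ exist because $L(mQ-\sum_i(k_i+1)R_i)$ strictly contains $L((m-1)Q-\sum_i(k_i+1)R_i)$ once $m$ is large. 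A local computation gives $v_Q(D(y_0))=-(m+1)$, while the vanishing conditions at the $R_i$ ensure that $D(y_0)$ acquires no pole at the poles of $D$; at every other point both $y_0$ and $D$ are regular, so $D(y_0)$ is regular there. Thus the pole divisor of $D(y_0)$ is exactly $(m+1)Q$, yielding $\deg(y_0)=m$ and $\deg(D(y_0))=m+1$ as required.

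The main obstacle is exactly this last bookkeeping: because $D$ is only meromorphic, $D(y_0)$ a priori has poles wherever $D$ does, and these must be suppressed to keep $\deg(D(y_0))=m+1$ --- which is what the auxiliary conditions $v_{R_i}(y_0)\ge k_i+1$ accomplish. One must check both that such $y_0$ still exist (Riemann--Roch, for $m$ large enough) and that the relevant leading terms do not accidentally cancel, which they do not since the orders involved are nonzero. Once this is in place the coprimality argument finishes the proof; the construction is uniform in the genus, so no separate treatment of the genus $0$, $1$ and $\ge 2$ cases is needed.
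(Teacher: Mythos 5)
Your proof is correct, but it follows a genuinely different route from the paper's. Both arguments begin identically --- choose a point ($Q$ for you, $x$ in the paper) where $D$ is regular and non-vanishing, and take a function whose only pole is there --- but the mechanism forcing $\mathbb{C}(y_0,D(y_0))=\mathbb{C}(X)$ differs. The paper takes the pole order to be a \emph{prime} $\ell>2\cdot\mathrm{genus}(X)+2$, so that in the tower $\mathbb{C}(y_0)\subseteq\mathbb{C}(y_0,D(y_0))\subseteq\mathbb{C}(X)$ the prime index $[\mathbb{C}(X):\mathbb{C}(y_0)]=\ell$ leaves only the two extreme cases, and then excludes $D(y_0)\in\mathbb{C}(y_0)$ by a purely local computation at $x$: every element of $\mathbb{C}(y_0)\subset\mathbb{C}((p^{\ell}))$ has $x$-adic order divisible by $\ell$, while $\mathrm{ord}_x\bigl(D(y_0)/y_0\bigr)=-1$. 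Because that argument is local at the single point $x$, the paper never needs to control $D(y_0)$ at the poles of $D$; any extra poles there are harmless. Your gcd-of-degrees argument ($d\mid m$ and $d\mid m+1$, so $d=1$) instead requires knowing the \emph{exact global} polar divisor of $D(y_0)$, which is precisely why you must impose the vanishing conditions $v_{R_i}(y_0)\ge k_i+1$ at the poles of $D$ and apply Riemann--Roch to the larger divisor $mQ-\sum_i(k_i+1)R_i$; that bookkeeping is carried out correctly (existence for $m\gg 0$, $v_Q(D(y_0))=-(m+1)$ since the characteristic is zero and $\epsilon(Q)\neq 0$, regularity at the $R_i$ and at all remaining points). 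What each approach buys: yours avoids choosing a prime and the slightly delicate identification $\mathbb{C}((1/y_0))=\mathbb{C}((p^{\ell}))$, replacing them by the coprimality of two consecutive degrees; the paper's is globally leaner, trading your conditions at the $R_i$ for primality plus one Laurent-series expansion. Your closing checks --- irreducibility of $f$ from the minimal polynomial of $y_1$ over $\mathbb{C}(y_0)$, both variables occurring, and $D$ being determined by $D(y_0)=y_1$ because $y_0$ is a separating element in characteristic zero --- correctly complete the identification of the associated pair with $(X,D)$, a point the paper leaves implicit.
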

\begin{proof} Let $g\in \mathbb{C}(X)$ satisfy $D(g)\neq 0$. Choose
a closed point $x\in X$ such that $g$ has no pole at $x$, 
$ord_x(g-g(x))=1$ and $ord_x(D(g))=0$. Let $p$ denote a local parameter at $x$. Then $\widehat{O}_{X,x}=\mathbb{C}[[p]]$ and
$ord_x(D(p))=0$. Let $\ell$ be a prime number such that 
$\ell >2\cdot\mbox{genus}(X)+2$ and let $f\in \mathbb{C}(X)$ have a pole of order $\ell$
at $x$ and no further poles. Then $[\mathbb{C}(X):\mathbb{C}(f)]=\ell$.
If $D(f)\not \in \mathbb{C}(f)$, then $\mathbb{C}(f,D(f))=\mathbb{C}(X)$
since $\ell$ is a prime number. 

Suppose that $D(f)\in \mathbb{C}(f)$.
 Then $\frac{D(f)}{f}\in \mathbb{C}(f)\subset \mathbb{C}((\frac{1}{f}))=\mathbb{C}((p^{\ell}))
\subset \mathbb{C}((p))$. This contradicts the fact that 
$ord_x(\frac{D(f)}{f})=-1$. \end{proof}

\begin{remark} {\rm An irreducible order one equation $f(y',y,z)=0$
over a finite field extension $K$ of $\mathbb{C}(z)$ induces a
pair $(X,D)$ of a curve $X$ over $K$ and a derivation $D$ of the function
field of $X/K$. The proof of Lemma 3.1 extends to this non autonomous
case. The statement is: For a given pair $(X,D)$ over $K$, there exists a finite extension $\tilde{K}$ of $K$ and an irreducible order one equation
$f(y',y,z)=0$ over $\tilde{K}$ which induces  
$\tilde{K}\times _KX$ equipped with the unique extension of $D$. } \hfill $\square$ \end{remark}

By $(X,D)$ we denote a pair as in Lemma~\ref{3.1}. Further for any finite extension
$K$ of $\mathbb{C}(z)$ we denote by $K\times (X,D)$ the curve
$K\times _\mathbb{C}X$ with function field $K\otimes _\mathbb{C}
\mathbb{C}(X)$ equipped with the derivation $D^+$ defined by
$D^+=\frac{d}{dz}$  on $K$  and $D^+=D$ on $\mathbb{C}(X)$.
We note that $D^+$ is not a meromorphic vector field since it
is not zero on $K$.

\begin{lemma}\label{3.2} Let
$\phi: K\times (X_1,D_1)\rightarrow K\times (X_2,D_2)$ be an
isomorphism. Then there exists
an isomorphism $(X_1,D_1)\rightarrow (X_2,D_2)$.
\end{lemma}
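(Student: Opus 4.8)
The plan is to regard the given $K$-linear differential isomorphism geometrically and descend it to $\mathbb{C}$ in two stages: first descend the bare curve isomorphism, then upgrade it to one respecting the vector fields. First I would forget the derivations and note that $\phi$ exhibits an isomorphism $X_{1,K}\cong X_{2,K}$ of curves over $K$. The functor $\underline{\mathrm{Isom}}_{\mathbb{C}}(X_1,X_2)$ is represented by a $\mathbb{C}$-scheme of finite type; it has a $K$-point, hence is nonempty, and since $\mathbb{C}$ is algebraically closed it has a $\mathbb{C}$-point by the Nullstellensatz. Thus $X_1\cong X_2$ over $\mathbb{C}$; transporting $D_2$ through such an isomorphism I may assume $X_1=X_2=:X$ and that $\phi$ is a $K$-point $\alpha$ of the group scheme $G:=\mathrm{Aut}(X)$, with $D_1,D_2$ two nonzero meromorphic vector fields on $X$. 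The target is now a single constant $\beta\in G(\mathbb{C})$ with $\beta_\ast D_1=D_2$.

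Next I would translate the differential condition $\phi\circ D_1^+=D_2^+\circ\phi$, where $D_i^+=\tfrac{d}{dz}+D_i$. Writing $\alpha$ as a $z$-dependent element $\alpha_z$ of $G$ and differentiating, the condition becomes a horizontality relation of the form $\tfrac{d\alpha_z}{dz}=(\alpha_z)_\ast\,\omega(z)$ with $\omega(z):=(\alpha_z^{-1})_\ast D_2-D_1$; the crucial structural point is that $\tfrac{d\alpha_z}{dz}$ lies in the tangent space $T_{\alpha_z}G$, so $\omega(z)$ is forced to be a \emph{global} holomorphic vector field, i.e.\ an element of $\mathfrak{a}:=H^0(X,T_X)=\mathrm{Lie}\,G$. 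This already settles genus $\geq 2$: then $\mathfrak{a}=0$, so $\omega\equiv 0$, hence $\tfrac{d\alpha_z}{dz}=0$ and $\alpha_z=\beta$ is constant, and $\omega=0$ reads $\beta_\ast D_1=D_2$. (This is exactly where the finiteness of $\mathrm{Aut}(X)$ for higher genus is used.)

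For genus $0$ and $1$ the group $G$ is positive-dimensional, and $\alpha_z$ can genuinely depend on $z$, so the work is to manufacture a constant intertwiner anyway. The first, geometric, mechanism is pole-rigidification: since $\omega(z)=(\alpha_z^{-1})_\ast D_2-D_1$ must be holomorphic, $\alpha_z$ must carry the polar divisor of $D_2$ onto that of $D_1$, with matching polar parts. When $D_1$ (equivalently $D_2$) has enough singular points to pin an automorphism down — three points on $\mathbb{P}^1$, or a single pole together with the translation structure on an elliptic curve — this incidence data leaves only finitely many candidate $\alpha_z$. As $z$ ranges over the connected base, $\alpha_z$ is then constant, and again $\omega=0$ gives $\beta_\ast D_1=D_2$.

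The main obstacle is the remaining low-complexity subcase, where $D_1,D_2$ have too few poles to rigidify $\alpha_z$ — most sharply when $D_1,D_2$ are themselves globally holomorphic, hence lie in $\mathfrak{a}=\mathrm{Lie}\,G$. Here I would integrate the flow equation, expressing $\alpha_z$ through $\exp(zD_2)\,\alpha_0\,\exp(-zD_1)$ in $G$, and exploit that $\alpha_z\in G(K)$ is \emph{algebraic} in $z$ whereas the one-parameter subgroups $\exp(zD_i)$ are transcendental in $z$ unless $D_i$ is nilpotent (in $\mathfrak{sl}_2$ for genus $0$) or the displacement vanishes. The plan is to argue that algebraicity over $K$ forces the ``infinitesimal motion'' $\omega(z)$ to be conjugate to $0$ by a constant element, which is precisely the existence of a constant $\beta$ with $\beta_\ast D_1=D_2$; for genus $1$ this uses the transcendence of the Weierstrass uniformization, and for genus $0$ that conjugacy classes in $\mathfrak{sl}_2$ are cut out by the characteristic polynomial, an invariant unchanged under the extension $\mathbb{C}\subset K$. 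Making this transcendence-versus-algebraicity dichotomy rigorous, and checking that the constant element it produces is a genuine automorphism of $X$ intertwining $D_1$ and $D_2$, is the delicate part of the argument; everything else is formal.
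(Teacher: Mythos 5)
Your outline follows the same route as the paper's proof, only in more conceptual clothing. The paper also proceeds in your two stages: it descends the curve isomorphism by spreading $\phi$ out over $\mathrm{spec}(R)$ for a finitely generated $\mathbb{C}$-algebra $R$ with fraction field $K$ and reducing modulo a maximal ideal (your Isom-scheme argument is the same specialization); it dispatches genus $\geq 2$ by finiteness of $\mathrm{Aut}(X)$ (equivalently your $H^0(X,T_X)=0$); it handles genus $0$ and $1$ by exactly your pole-incidence mechanism (a pole $p$ of $D_1$ gives a pole $\phi(p)$ of $D_2$, and since the polar loci are $\mathbb{C}$-points this pins $\phi$ down when there are enough poles); and in the pole-free genus-$1$ case the paper's contradiction $(x_0')^2=c^2(x_0^3+ax_0+b)$, whose nonconstant solutions are doubly periodic hence transcendental, is precisely your ``transcendence of the Weierstrass uniformization.'' What your packaging buys is a uniform explanation of the paper's computations: the observation that the velocity $\omega(z)$ of the family $\alpha_z$ lies in $K\otimes H^0(X,T_X)$ accounts in one stroke for the paper's explicit error terms (the quadratic polynomial in the genus-$0$ identity, and $\phi\frac{d}{dz}\phi^{-1}=\frac{d}{dz}-\frac{x_0'}{y_0}y\frac{d}{dx}$ in genus $1$, found there by a Maple computation). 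Moreover your nilpotent caveat is genuinely needed: for $D_1=D_2=\frac{d}{dy}$ the map $\alpha_z\colon y\mapsto cy+(1-c)z$ is algebraic and honestly $z$-dependent, so one cannot prove $\alpha_z$ constant and must, as you say, aim only for \emph{some} constant conjugator; the paper's one-line ``a similar calculation can be made for the case that $f_1\frac{d}{dy}$ has at most one pole'' glosses exactly this point.

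There is, however, one subcase that falls between your two mechanisms, and you should notice it: genus $0$ with a nonempty polar locus whose stabilizer in $\mathrm{PGL}_2$ is positive-dimensional, i.e.\ one or two poles of $D_i$. Here ``finitely many candidate $\alpha_z$'' is not what incidence gives you, and the exponential formula does not apply since $D_i\notin\mathrm{Lie}\,G$. Matching polar parts does finitely rigidify when there are two poles (the leading polar coefficient forces $\lambda^{k+1}\in\mathbb{C}$ for the torus parameter $\lambda$, whence $\lambda$ constant --- this is the paper's ``explicit calculation'' in the two-pole case) and when the single pole has order $\geq 2$; but for a single simple pole at $\infty$, i.e.\ $D_1=f_1(y)\frac{d}{dy}$ with $f_1$ of degree exactly $3$, matching polar parts only fixes the semisimple part $a$ of $y\mapsto ay+b$ and leaves $b\in K$ free. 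The fix is a small residual computation of the paper's type: with $\alpha_z\colon y\mapsto y+b(z)$ the conjugation relation reads $f_2(y)=f_1(y-b)+b'$, and since $f_1,f_2\in\mathbb{C}[y]$ are cubic, the $y^2$-coefficient forces $b\in\mathbb{C}$ (and then $b'=0$), so a constant conjugator exists. Finally, the dichotomy you defer in the holomorphic genus-$0$ case does close along your own lines: the entries of $\exp(z\tilde{D}_2)\alpha_0\exp(-zD_1)$ are finite sums $\sum p_i(z)e^{\mu_i z}$ with polynomial $p_i$, so algebraicity over $\mathbb{C}(z)$ forces every exponent actually occurring to vanish, which forces the characteristic polynomials of $D_1,D_2\in\mathfrak{sl}_2$ to agree; and nonzero elements of $\mathfrak{sl}_2$ with equal characteristic polynomial (including two nonzero nilpotents) are $\mathrm{PGL}_2(\mathbb{C})$-conjugate. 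With these two repairs your argument is complete and, in structure, coincides with the paper's.
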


\begin{proof} The isomorphism $\phi :K\times X_1\rightarrow K\times X_2$ induces an isomorphism $\phi_1:spec(R)\times X_1\rightarrow
spec(R)\times X_2$ for some finitely generated $\mathbb{C}$-algebra
$R$ with field of fractions $K$. After dividing by a maximal ideal of
$R$ we find an isomorphism $X_1\rightarrow X_2$. In the sequel we identify $X_1$ and $X_2$ with some $X$. It is given that some 
automorphism $\phi$ of $K\times X$ has the property
$D_2^+=\phi D_1^+\phi ^{-1}$. We have to show that there exists
an automorphism $\psi$ of $X$ with $D_2=\psi D_1\psi ^{-1}$.

 If the genus of $X$ is $\geq 2$, then $\overline{K}\times X$ and $X$ have the same finite group of automorphisms and there is nothing to prove.\\

Suppose that $X$ has genus zero. Write  $\mathbb{C}(X)=\mathbb{C}(y)$. On the field
$K(y)$ we consider two derivations: $\frac{d}{dy}$ with $\frac{d}{dy}(y)=1$ and $\frac{d}{dy}$
is zero on $K$; further $\frac{d}{dz}$ defined by $\frac{d}{dz}(z)=1$ and $\frac{d}{dz}(y)=0$.
 Let $D_j(y)=f_j(y)\in \mathbb{C}(y)$, then $D_j^+=\frac{d}{dz}+f_j(y)\frac{d}{dy}$ for $j=1,2$. 
 Suppose 
$D_2^+=\phi ^{-1}D_1^+\phi$ where $\phi (y)=\frac{Ay+B}{Cy+D}$ with
${A\ B\choose C\ D}\in {\rm SL}_2(K)$. One computes the identity
\begin{small}
\[f_2(y)=(A'C-AC')(Dy-B)^2+(A'D-AD'+B'C-BC')(Dy-B)(-Cy+A)+\]
\[(B'D-BD')(-Cy+A)^2+(-Cy +A)^2+
(-Cy+A)^2f_1(\frac{Dy-B}{-Cy+A}).\]
\end{small}
A pole $p$ of $f_1\frac{d}{dy}$ yields a pole $\phi (p)$ of $f_2\frac{d}{dy}$ and
so $\phi (p)\in \mathbb{C}\cup \{\infty \}$. If $f_1$ has at least three poles, then $\phi $ is an automorphism of $\mathbb{P}^1_\mathbb{C}$ and we can take $\psi =\phi$.  

If  $f_1\frac{d}{dy}$ has two poles, then the same holds for $f_2\frac{d}{dy}$. We may suppose
that these poles are $0$ and $\infty$ and that $\phi$ has $0$ and $\infty$ as fixed points.
Then $D=A^{-1},\ B=C=0$ and an explicit calculation shows that again $\phi \in {\rm PSL}_2(\mathbb{C})$. A similar calculation can be made for the case that $f_1\frac{d}{dy}$ has at most one pole.\\

Suppose that the genus of $X$ is one and consider $X$ as an elliptic  curve with function field  
$\mathbb{C}(X)=\mathbb{C}(x,y)$ with relation $y^2=x^3+ax+b$. Then $y\frac{d}{dx}$
is the standard invariant derivation on $\mathbb{C}(x,y)$. Let $D_j=f_jy\frac{d}{dx}$ with
$f_j\in \mathbb{C}(x,y)$. We extend $y\frac{d}{dx}$ to $K(x,y)$ by $y\frac{d}{dx}$ is zero on $K$
and introduce $\frac{d}{dz}$ on $K(x,y)$ by $\frac{d}{dz}(z)=1$ and $\frac{d}{dz}$ is zero on
$\mathbb{C}(x,y)$. Then $D^+_j=\frac{d}{dz}+f_jy\frac{d}{dx}$. We are given 
$D_2^+=\phi D_1^+\phi ^{-1}$ and want to prove that there is an automorphism
$\psi$ of $\mathbb{C}(x,y)$ with $D_2=\psi D_1\psi ^{-1}$. We may suppose that $\phi$ is
a translation over the $K$-valued point $(x_0,y_0)$ of $X$.  

Now $\frac{d}{dz}+f_2 y\frac{d}{dx}=\phi \frac{d}{dz}\phi ^{-1} +
\phi f_1 y\frac{d}{dx}\phi ^{-1}$.  Now $\phi f_1y\frac{d}{dx}\phi^{-1}=\phi (f_1)\cdot y\frac{d}{dx}$ and $\phi \frac{d}{dz}\phi ^{-1}=\frac{d}{dz}-\frac{x_0'}{y_0}y\frac{d}{dx}$ (the last formula we found using
an explicit Maple calculation). Suppose $f_1$ has a pole $p$. Then $f_2$ has a
pole $\phi(p)$, since $\frac{x_0'}{y_0}y\frac{d}{dx}$ has no poles. The points $p$ and $\phi(p)$ belong to $X({\mathbb C})$
and therefore the translation $\phi$ is defined over ${\mathbb C}$.
On the other hand, if $f_1$ has no poles, then the same holds for $f_2$ and $c:=f_2-f_1$ is a constant.
Suppose $c\neq 0$. Then $(x_0')^2=c^2y_0^2=c^2(x_0^3+ax_0+b)$. The non constant solutions of this
Weierstrass equation are transcendental (since they are doubly periodic), contradicting the algebraicity of $x_0$.
\end{proof}

By \ref{3.1} and \ref{3.2}, the set of the strict equivalence classes of autonomous first order equations coincides with the set of the equivalence classes of pairs $(X,D)$. We sketch a proof  of the statement:
 {\it There exists an algorithm deciding whether two pairs $(X_j,D_j),\ j=1,2$ are equivalent}. \\

In the next sections we will show that  there is an algorithm deciding whether two curves of the same genus over a fixed algebraically closed field of characteristic zero are isomorphic.
  
  This reduces the problem to the case $X:=X_1=X_2$ and deciding whether there exists an automorphism $\psi$ of $X$ such that 
$D_2=\psi D_1\psi ^{-1}$. If the genus of $X$ is $\geq 2$, then the
finite group of automorphisms of $X$ is computable and this finishes
this case. 

  Suppose that $X$ has genus $\leq 1$, then the automorphism group
of $X$ is infinite. However, the equality  $D_2=\psi D_1\psi ^{-1}$
implies that $\psi$ sends the divisor of $D_1$ to the divisor of $D_2$.
One easily verifies that there is an algorithm for deciding the latter 
condition. Moreover a meromorphic vector field $D$ is determined, up to
a constant, by its divisor.\\

\begin{lemma}\label{3.4} Let the autonomous equation $f$ induce the pair 
$(X,D)$. Then $f(y',y)=0$ has an algebraic solution if and only if
$\mathbb{C}(X)$ contains an element $t$ with $D(t)=1$.  
\end{lemma}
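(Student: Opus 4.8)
The plan is to prove the two implications separately, translating between the geometric pair $(X,D)$ and explicit differential fields. Throughout I write $\mathbb{C}(X)=\mathbb{C}(y_0,y_1)$ with $f(y_1,y_0)=0$ and $D(y_0)=y_1$; differentiating the relation gives $D(y_1)=-\tfrac{\partial f/\partial T}{\partial f/\partial S}(y_1,y_0)\,y_1$, which is legitimate since $\partial f/\partial S$ is nonzero in the field $\mathbb{C}(X)$.

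For the implication $(\Leftarrow)$ I would argue directly. Suppose $t\in\mathbb{C}(X)$ satisfies $D(t)=1$. Then $t$ is non-constant, so it exhibits $\mathbb{C}(X)$ as a finite extension of $\mathbb{C}(t)$; identifying $t$ with $z$ turns $\mathbb{C}(X)$ into a finite extension $K$ of $\mathbb{C}(z)$ on which $D$ becomes the unique extension of $\tfrac{d}{dz}$, precisely because $D(t)=1=\tfrac{dz}{dz}$. With this identification $y:=y_0$ is algebraic over $\mathbb{C}(z)$, and the relations $f(y_1,y_0)=0$, $D(y_0)=y_1$ say exactly that $f(y',y)=0$ with $y'=\tfrac{dy}{dz}$. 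Composing with any $\mathbb{C}(z)$-embedding $K\hookrightarrow Mer(U)^a$ then yields an algebraic solution.

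For $(\Rightarrow)$, let $y$ be an algebraic solution, so $y$ and $y'=\tfrac{dy}{dz}$ are algebraic over $\mathbb{C}(z)$ and $f(y',y)=0$. Put $E:=\mathbb{C}(z,y)$, the function field of a curve over $\mathbb{C}$ carrying the derivation $\tfrac{d}{dz}$. Sending $y_0\mapsto y$, $y_1\mapsto y'$ gives a differential embedding $\psi:(\mathbb{C}(X),D)\hookrightarrow(E,\tfrac{d}{dz})$ with image $F:=\mathbb{C}(y,y')$; one checks that $F$ is stable under $\tfrac{d}{dz}$ using $y''=-\tfrac{\partial f/\partial T}{\partial f/\partial S}(y',y)\,y'$. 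Now $z\in E$ has $\tfrac{d}{dz}(z)=1$, but in general $z\notin F$, so one cannot simply take $t=z$.

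The descent is carried out by the trace. Since $\mathrm{char}=0$, the extension $E/F$ is finite separable, say of degree $n$; the derivation $\tfrac{d}{dz}$ preserves $F$ and commutes with $\mathrm{Tr}_{E/F}$, because its unique extension to a normal closure commutes with every $F$-embedding of $E$. Hence $t_0:=\tfrac1n\,\mathrm{Tr}_{E/F}(z)\in F$ satisfies $\tfrac{d}{dz}(t_0)=\tfrac1n\,\mathrm{Tr}_{E/F}(1)=1$, and $t:=\psi^{-1}(t_0)\in\mathbb{C}(X)$ gives $D(t)=1$. I expect the crux to be exactly this descent step: a single algebraic solution may generate a field $E$ strictly larger than $F\cong\mathbb{C}(X)$, so the naive conclusion $z\in\mathbb{C}(X)$ can fail, and it is the averaging over conjugate solutions (the trace) that forces a first integral to live inside $\mathbb{C}(X)$ itself. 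A more analytic alternative would observe that $\psi^{\ast}\big(\tfrac{dx}{D(x)}\big)=dz$ is exact on $E$ and transfer the vanishing of all residues and periods back to $X$ along the finite covering $E/F$; but the trace argument is cleaner and purely algebraic.
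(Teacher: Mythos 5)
Your proof is correct, and the two directions split as follows. The implication $D(t)=1\Rightarrow$ existence of an algebraic solution is essentially the paper's own argument: the paper sends $t\mapsto z+c$ and extends the differential isomorphism $\mathbb{C}(t)\to\mathbb{C}(z)$ to an embedding of $\mathbb{C}(X)$ into $\overline{\mathbb{C}(z)}$, which is exactly your identification of $t$ with $z$. For the converse, however, you take a genuinely different route. The paper embeds $\mathbb{C}(X)$ into $\overline{\mathbb{C}(z)}$ via the solution, observes that $z$ is then an algebraic solution of the inhomogeneous equation $t'=1$ over the differential field $\mathbb{C}(X)$, and invokes Picard--Vessiot theory: the differential Galois group of $t'=1$ is an algebraic subgroup of $\mathbb{G}_a$, it is finite because $z$ is algebraic over $\mathbb{C}(X)$, and a finite algebraic subgroup of $\mathbb{G}_a$ in characteristic $0$ is trivial, whence $z\in\mathbb{C}(X)$. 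Your descent $t_0=\tfrac1n\,\mathrm{Tr}_{E/F}(z)$ replaces this with a purely field-theoretic averaging argument; the commutation of $\mathrm{Tr}_{E/F}$ with the derivation, via uniqueness of extensions of derivations along separable algebraic extensions to a normal closure, is standard and your justification of it is sound. What each approach buys: yours is more elementary and self-contained (no differential Galois theory), while the paper's gives directly the stronger conclusion $z\in F$, i.e.\ $E=F$. In fact your own construction recovers this stronger fact: since the field of constants of $E\subset\overline{\mathbb{C}(z)}$ is exactly $\mathbb{C}$ (differentiate the minimal polynomial of a constant element), $(z-t_0)'=0$ forces $z=t_0+c$ with $c\in\mathbb{C}$, so $z\in F$ after all and your trace has degree $n=1$. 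Consequently your cautionary remark that ``the naive conclusion $z\in\mathbb{C}(X)$ can fail'' is never actually realized for a nondegenerate algebraic solution --- though this misjudgment lives only in your commentary and does not affect the proof, which nowhere assumes $z\notin F$. One tacit hypothesis you share with the paper: both arguments treat the algebraic solution as inducing an \emph{embedding} of $\mathbb{C}(X)$ (your $\psi$, the paper's ``This induces a differential embedding''), thereby excluding degenerate constant solutions $y\in\mathbb{C}$, for which $\psi$ fails to be injective; since the paper makes the same implicit restriction, this is not a gap relative to its proof.
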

\begin{proof} Let $t\in \mathbb{C}(X)$ satisfy $D(t)=1$. The differential isomorphism $\phi :\mathbb{C}(t)\rightarrow \mathbb{C}(z)$, which sends $t$ to $z+c$ (any constant $c$) extends to a differential embedding of $\mathbb{C}(X)$ into the algebraic closure of 
$\mathbb{C}(z)$. In particular, this produces an algebraic solution for
the equation $f(y',y)=0$.

 On the other hand, suppose that an algebraic solution $y$ exists.
This induces a differential embedding of $\mathbb{C}(X)$ into the
algebraic closure of $\mathbb{C}(z)$. Thus $z$ is an algebraic solution
of the inhomogeneous differential equation $t'=1$ over the differential
field $\mathbb{C}(X)$. Since the differential Galois group of the 
equation $t'=1$ is a finite algebraic subgroup of the additive group $\mathbb{G}_{a}$ one  has $z\in \mathbb{C}(X)$.  \end{proof}
We note that Lemma~\ref{3.4} is essentially present in  \cite{A-C-F-G}.\\

{\it An algorithm for algebraic solutions of the autonomous equation $f(y',y)=0$.}\\
Let the pair $(X,D)$ be induced by $f$. According to Lemma~\ref{3.4}, it suffices to produce an algorithm for finding a solution of $D(t)=1$
with $t\in \mathbb{C}(X)$. Consider a closed point $x\in X$ with local parameter $p$. Then $\widehat{O}_{X,x}=\mathbb{C}[[p]]$. 

A local solution at $x$ has the form $t=a_kp^k+a_{k+1}p^{k+1}+\cdots
\in \mathbb{C}((p))$ and $D(t)=(ka_kp^{k-1}+\cdots )D(p)=1$.

If $D(p)$ has no pole or zero, then $t=a_0+a_1p+\cdots$ with $a_1\neq 0$.

If $D(p)$ has a zero, $D(p)=b_kp^k+\cdots ,\ b_k\neq 0,\ k\geq 1$,
then $k=1$ is not possible and for $k>1$ one has $t$ has a pole of order
$k-1$.

If $D(p)$ has a pole of order $-k$, then $t$ has a zero of order $k+1$.

It follows that a possible $t$ with $D(t)=1$ lies in $H^0(X,L)$ for a 
known line bundle $L$. Testing $D(t)=1$ for the elements of
$H^0(X,L)$ is done by using the Coates algorithm \cite{Coates}.

\section{Strict equivalence for genus 0}
Suppose that $X$ has genus 0. Then $X$ has a rational point since $\mathbb{C}(z)$ is a $C_1$-field. Therefore
$F=\mathbb{C}(z)(u)$ is the function field of $X$ and $u'=g(u,z)$ for some $g(u,z)\in F$. Thus this equation is strictly equivalent  to $f$. \\

It is easily verified that $f$ has PP if and only if $g(u,z)=a_0(z)+a_1(z)u+a_2(z)u^2$.  Indeed, PP is equivalent to
the derivation $D$, given by $D(z)=1,\ D(u)=g(u,z)$, having no poles. \\

Consider the equation $u'=u\cdot G(u,z)$ and $G(u,v)=\alpha _0 \prod (u-\alpha _j)^{n_j}$ with all $\alpha _*$
in a finite extension of $\mathbb{C}(z)$. This equation is strictly equivalent to an autonomous one, i.e., $v'=h(v)$ with $h(v)\in \mathbb{C}(v)$, if and only if $u=\frac{av+b}{cv+d}$ with $a,b,c,d$ in a finite extension of $\mathbb{C}(z)$, $ad-bc\neq 0$ and
\[\frac{u'}{u}=\frac{a'v+b'+ah(v)}{av+b}-\frac{c'v+d'+ch(v)}{cv+d}=\alpha _0 \cdot \prod (\frac{av+b}{cv+d}-\alpha _j)^{n_j}.\]
From this equality one can make a guess for $av+b$ and/or $cv+d$ in case this term is not $1$ and not a multiple of
$z+\beta$ with $\beta \in \mathbb{C}$. This method may solve in some cases the question whether the equation is strictly
equivalent to an autonomous equation.\\

The problem of deciding strict equivalence between two equations $u_1'=g_1(u_1,z)$ and $u_2'=g_2(u_2,z)$ seems to be, like the problem
of finding algebraic solutions, `recursive enumerable'.  Indeed, one has
to investigate whether for some  algebraic $A,B,C,D$ (with $AD-BC=1$) the transformation $u_1\mapsto u_2:=\frac{Au_1+B}{Cu_1+D}$ maps the first equation to the second. For fixed positive integers $d,n$, the Gr\"obner
algorithm produces an answer for $A,B,C,D$ with degree $\leq d$ over
$\mathbb{C}(z)$ and such that the coefficients of their equations over $\mathbb{C}(z)$ are rational functions of degrees $\leq n$. Missing
for a true algorithm is again an a priori bound on $d,n$. A special case
of the problem is the following:\\
% {\it We return to the example $y'=y^3+z$}. 
 If the equation $u'=g(u,z)$ is strictly equivalent to the equation $v'=0$ by a transformation $u=\frac{Av+B}{Cv+D}$ with $A,B,C,D$ algebraic over $\mathbb{C}(z)$ and $AD-BC=1$, then $u_c:=\frac{Ac+B}{Cc+D}$ is a generic algebraic solution.  In \cite{A-C-F-G} it is shown that a generic algebraic solution exists if and only if the differential polynomial %$F:=y'-y^3-z$
$F=u'-g(u,z)$ has zero remainder with respect to a certain  standard differential polynomial  depending upon a number of positive integers. It is remarked in \cite{A-C-F-G} that there is no a priori bound known for these integers if the equation is not autonomous.  This is in accordance with our opinion that there is no algorithm  for the  question whether
equations like $y'=y^3+z$ have algebraic solutions.

A heuristic  indication that no algorithm for finding algebraic  solutions exists is  the order  two equation $(\frac{zy'}{y})'=0$ of low complexity.
One observes namely that the algebraic solutions $z^a$ with $a\in \mathbb{Q}$ have arbitrary complexity.

\begin{remark}[{\it Properties of an autonomous equation of genus 0}]
An autonomous equation with genus 0 has the form $v'k(v)=1$ with $k(v)\in \mathbb{C}(v)^*$ (or is the trivial equation $v'=0$). Write $k(v)=\sum _j \frac{a_j}{v-b_j}+\frac{d}{dv}(k_0(v))$, where the $a_j\not \in \mathbb{Z}$ and $k_0(v)\in \mathbb{C}(v)$. By integration one finds a ``functional equation'' for the solutions, namely
$\sum _ja_j\log (v-b_j)+k_0(v)=z+c$.

If the logarithmic terms  are not present in this formula and the rational function $k_0(v)$ has degree 1, then the only moving singularities of solutions $v$ are poles and the equation has PP.

If there are no logarithmic terms but $k_0(v)$ has degree $>1$, then there are moving branch points for the solutions.

Suppose that $k_0(v)=0$ and thus $z+c=\sum a_j\log (v-b_j)$. Now $z$ has as function of $v$ logarithmic
singularities. One would expect that $v$ has exponential singularities, like $e^{\frac{1}{z-a}}$. In the general case
one expects branch points singularities and singularities of the type $e^{\frac{1}{\sqrt[m]{z-a}}}$
(a mixing of branching and exponential singularities).
\end{remark}

\begin{remark}[{\it Infinitesimal automorphism}] An {\it infinitesimal automorphism} of an order one equation $f$ is, by  definition, a $\mathbb{C}(z)$-linear
derivation $D$ of the field of fractions $F=\mathbb{C}(z)(s,t)$ of $\mathbb{C}(z)[s,t,\frac{1}{d}]$, commuting with the differentiation on $F$. We note that $D$ is determined by $D(t):=h\in F$ and that $h$ should satisfy the equation
$h'=D(s)$ and $D(s)$ is given by the identity $$0=D(f(s,t))=D(s)\cdot \frac{\partial f}{\partial s}
 +h\cdot \frac{\partial f}{\partial t}.$$

For the genus 0 case and $u'=g(u,z)\in F=\mathbb{C}(z)(u)$, the condition on $h:=D(u)$ is
$h\frac{\partial g}{\partial u}-g\frac{\partial h}{\partial u}=\frac{\partial h}{\partial z}$.
For an autonomous equation of genus 0, i.e., $g\in \mathbb{C}(u)$, there exists a non trivial infinitesimal automorphism. Indeed, $D(u)=\lambda g$ with $\lambda \in \mathbb{C}^*$ obviously satisfies the above equation.
  A general equation $f$ of genus 0 has no infinitesimal automorphisms. Indeed,  a computation shows that the equation $u'=a_1u+a_0$, with general $a_0,a_1\in \mathbb{C}(z)$, has no infinitesimal automorphism $D\neq 0$.
\end{remark}

\section{Strict equivalence for genus 1}

Let, for $i=1,2$, the curves $X_i$ with function fields $\mathbb{C}(z)(s_i,t_i)$ associated to the order one differential equation $f_i$, have genus 1.  After a finite extension $K$ of $\mathbb{C}(z)$, the curves have a point $P_i$. The classical method of using the meromorphic functions on $X_i$ with only a pole at $P_i$ yields
$K(s_i,t_i)=K(x,y)$ with $y^2=x^3+a_ix+b_i$ and $a_i,b_i\in K$.
The $j$-invariant classifies elliptic curves over an algebraically closed field. Hence a necessary condition for strict equivalence of $f_1$ and $f_2$ is equality of the $j$-invariants. 

  Suppose that the $j$-invariants 
coincide. Then after replacing $K$ by a finite extension, we may 
identify the function fields of $X_1$ and $X_2$ with $K(x,y),\ 
y^2=x^3+ax+b$. Let $D_1,D_2$ denote the two $\mathbb{C}$-linear derivations on $K(x,y)$, induced by $f_1$ and $f_2$.  Then $f_1$ and $f_2$ are strictly equivalent if and only if there exists 
 a $\overline{K}$-linear automorphism $A$ of
the field $\overline{K}(x,y)$ such that $D_2=AD_1A^{-1}$.    The group of the automorphisms $Aut(E)$ of the elliptic curve $E$ over $\overline{K}$ corresponding to $\overline{K}(x,y)$, has a normal subgroup 
$E(\overline{K})$ of translations  and 
$Aut(E)/E(\overline{K})$ is a cyclic group of order $2,4$ or $6$. 

 As in the case of first order equations of genus 0, the problem of strict equivalence is `recursive enumerable' due to the large group $E(\overline{K})$.  Missing for a true algorithm is an a priori estimate on the degree of the field extension $\tilde{K}$ of $\mathbb{C}(z)$ and of the `height' of the element in $E(\tilde{K})$ needed for a possible automorphism $A$.

\bigskip

Let the order one equation $f(y',y,z)=0$ have genus 1.  If the $j$-invariant is not in $\mathbb{C}$, then $f$ is not strictly equivalent to a semi-autonomous equation.

In the other case, we may suppose that $f$ corresponds, after a finite extension of $K$, to a differential field $K(x,y)$ with $y^2=x^3+ax+b$ with $a,b\in \mathbb{C}$. The differentiation $'$ on the field $K(x,y)$ can be computed and is determined by  $x'=a_0(x,z)+a_1(x,z)y$ (say with
$a_1(x,z)\neq 0$).  Thus we have replaced the original equation $f(y',y,z)=0$ by
$ (\frac{x'-a_0(x,z)}{a_1(x,z)})^2=x^3+ax+b$.  This differential equation 
is far from unique, since it depends on the choice of the  `origin' $P$
of the elliptic curve.  
It seems not possible to decide whether the given equation
$f$ is strictly equivalent to an autonomous equation. However, the verification of PP does not depend on the
particular choice of the point $P$ (see \cite{G-vdP} for details).\\

The difficulty in making strict equivalence explicit for the case that the curve associated to $f$ has genus 0 or 1,
is due to the large group of automorphisms of $X$. We will see that for hyperelliptic curves the situation is different.

\section{Hyperelliptic curves}

Let the pair $(X,D)$, consisting of a curve $X$ over a finite extension $K$ of $\mathbb{C}(z)$ and of a $\mathbb{C}$-linear derivation $D$ of the function field of $X$ satisfying $D(z)=1$, correspond to the order one equation 
$f(y',y,z)=0$. We suppose that the genus $g$ of $X$ is $\geq 2$. \\ An algorithm, due to J.~Coates, computes
for a curve given by an irreducible plane equation,  an explicit basis
of $H^0(X,L)$, where $L$ is any line bundle. In particular, this algorithm
computes an explicit basis
 of the $g$-dimensional vector space $H^0(X,\Omega _{X/K})$ over $K$ (in fact for a number field $K$; the function field case is similar).
For a  closed point $x$ of $\overline{K}\times_K  X$, one chooses  a local parameter $t$
and considers the map $\overline{K}$-linear map $\ell _x:\overline{K}\otimes _KH^0(X,\Omega _{X/K})\rightarrow \overline{K}$, given by $\ell_x(\omega)=a$ if, locally at $x$, one has $\omega =fdt$ and $f(x)=a$.   
A change of $t$ has the effect of multiplying $\ell_x$ by an element in $\overline{K}^*$. 
This yields an algorithmic description of  the  canonical morphism $\phi :X\rightarrow \mathbb{P}(H^0(X,\Omega _{X/K})^*)\cong \mathbb{P}^{g-1}_K$.\\

For genus two, $\mathbb{P}(H^0(X,\Omega _{X/K})^*)$ is the projective line over $K$ and we obtain an explicit
degree two map $\phi: X\rightarrow \mathbb{P}^1_K$.  This leads to an explicit equation $y^2=P(x)$, with
$P(x)\in K(x)$, where $x$ is a parameter for the projective line over $K$. Since the genus is two, one can take
$P(x)$ to be  a separable polynomial of degree six.\\

If the genus $g$ is $>2$, then for a `general' curve $X$, the morphism $\phi$ is the canonical embedding of
$X$ into  $\mathbb{P}^{g-1}_K$. The curves for which $\phi$ is not an embedding are called {\it hyperelliptic}.
It is known that (see, e.g., \cite{vL-vdG}), in that case, the image of $\phi$ in $\mathbb{P}_K^{g-1}$ is a genus zero curve, called $(g-1)$-uple curve. Since the field $K$ is a $C_1$-field, the genus zero curve is isomorphic to $\mathbb{P}^1_K$.  
The $(g-1)$-uple curve is an embedding $\mathbb{P}^1_K\rightarrow \mathbb{P}_K^{g-1}$, given 
 by $(x_0:x_1)\in \mathbb{P}_K^1\mapsto (x_0^{g-1}:x_0^{g-2}x_1,\dots , :x_1^{g-1})\in \mathbb{P}_K^{g-1}$,
in suitable coordinates.  The resulting morphism $X\rightarrow \mathbb{P}^1_K$ has degree two. The curve $X$ over $K$ is then 
represented by an explicit  equation $y^2=P(x)$ where $P(x)\in K[x]$
can be chosen to be  a separable polynomial of degree $2g+2$.

The main observation is the existence of an algorithm computing
an equation $y^2=P(x)$, with separable $P(x)\in K[x]$ of degree $2g+2$, for a curve $X$ over $K$ of genus $g\geq 2$ which is known to be  hyperelliptic.  Moreover, the divisor of $P(x)$ in $\mathbb{P}^1_K$ is unique up to automorphisms of $\mathbb{P}^1_K$.\\
 
{\it Testing (semi-)autonomous}.\\
Let the pair $(X,D)$ be derived from the order one equation $f(y',y,z)=0$
and suppose that $X$ is a hyperelliptic curve over $K$ of genus $g\geq 2$. Let $y^2=P(x)$ with $P(x)\in K(x)$ a separable polynomial of degree $2g+2$ and let $R$ denotes its divisor.  
Suppose that $\overline{K}\times_{K} X$ is isomorphic to $\overline{K}\times _{\mathbb{C}}X_0$. Then, as above, one obtains an equation $y^2=v$, with $v\in \mathbb{C}[x]$ a separable polynomial of degree $2g+2$, for $X_0$. Its divisor $\tilde{R}$ on $\mathbb{P}^1_{\mathbb{C}}$ is unique up to automorphisms of $\mathbb{P}^1_{\mathbb{C}}$.

  The isomorphism between $\overline{K}\times_{K} X$ and $\overline{K}\times _{\mathbb{C}}X_0$ induces an
  isomorphism between the two projective lines $\mathbb{P}^1_{\overline{K}}$ and $\overline{K}\times \mathbb{P}^1_\mathbb{C}$ which
  sends the divisor $R$ to $\tilde{R}$. We conclude the following.

\begin{proposition}\label{6.1} The equation $f$ is semi-autonomous (i.e., there is an isomorphism
  $\overline{K}\times_{K} X\rightarrow \overline{K}\times _{\mathbb{C}}X_0$) if and only if  there exists an element
  $A\in {\rm PGL}_2(\overline{K})$ such that the divisor $AR$ is defined over the subfield $\mathbb{C}$ of $\overline{K}$.
\end{proposition}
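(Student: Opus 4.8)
The plan is to reduce the whole statement to the uniqueness of the hyperelliptic structure in genus $g\geq 2$, which the discussion preceding the proposition has already made explicit: for a hyperelliptic curve the degree-two map to $\mathbb{P}^1$ is intrinsic, being the unique $g^1_2$ recovered from the canonical morphism, so the branch divisor $R\subset \mathbb{P}^1$ is attached to the curve canonically, up to the action of ${\rm PGL}_2$. Under this dictionary the claimed equivalence becomes a translation between ``$\overline{K}\times_K X$ and $\overline{K}\times_{\mathbb{C}}X_0$ are isomorphic'' and ``their branch divisors are ${\rm PGL}_2(\overline{K})$-equivalent''.

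For the direction ($\Leftarrow$), I would suppose $A\in {\rm PGL}_2(\overline{K})$ is such that $AR$ is defined over $\mathbb{C}$, say $AR=\overline{K}\times_{\mathbb{C}}\tilde{R}$ for a divisor $\tilde{R}$ on $\mathbb{P}^1_{\mathbb{C}}$. Since $\mathbb{C}$ is algebraically closed, $\tilde{R}$ is the divisor of a separable $v\in \mathbb{C}[x]$ of degree $2g+2$, and I set $X_0$ to be the curve $y^2=v$ over $\mathbb{C}$. Then $A$, viewed as an automorphism of $\mathbb{P}^1_{\overline{K}}$ carrying $R$ to the base change of $\tilde{R}$, lifts to an isomorphism between the double covers branched along these two divisors, that is, to an isomorphism $\overline{K}\times_K X\cong \overline{K}\times_{\mathbb{C}}X_0$; hence $f$ is semi-autonomous.

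For the direction ($\Rightarrow$), I would start from an isomorphism $\Phi:\overline{K}\times_K X\to \overline{K}\times_{\mathbb{C}}X_0$. As recalled before the proposition, $X_0$ carries an equation $y^2=v$ with separable $v\in \mathbb{C}[x]$ of degree $2g+2$ and branch divisor $\tilde{R}$ on $\mathbb{P}^1_{\mathbb{C}}$. Because $g\geq 2$, the $g^1_2$ is unique on each curve, so $\Phi$ carries the pencil on $\overline{K}\times_K X$ to the pencil on $\overline{K}\times_{\mathbb{C}}X_0$ and therefore descends to an isomorphism of the quotient projective lines, i.e.\ to an element $A\in {\rm PGL}_2(\overline{K})$. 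This $A$ necessarily sends the branch locus $R$ to the branch locus $\overline{K}\times_{\mathbb{C}}\tilde{R}$, so $AR$ is defined over $\mathbb{C}$, as required.

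The step I expect to be the main obstacle is the bookkeeping in ($\Rightarrow$): one must check carefully that the abstract isomorphism $\Phi$ of covers descends to a \emph{single} $A\in {\rm PGL}_2(\overline{K})$ sending $R$ to $\tilde{R}$ on the nose, rather than merely up to a further automorphism of $\mathbb{P}^1$. This rests on the uniqueness (equivalently, centrality) of the hyperelliptic involution for $g\geq 2$, which forces $\Phi$ to respect both two-to-one maps, together with the fact already used in the text that the target $\mathbb{P}^1$ and the branch divisor are determined up to ${\rm PGL}_2$. Once these two points are spelled out, the descended map $A$ is well defined and the equality $AR=\overline{K}\times_{\mathbb{C}}\tilde{R}$ is genuine, and the equivalence follows.
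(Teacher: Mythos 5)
Your proposal is correct and follows essentially the same route as the paper: the text preceding the proposition derives the implication ($\Rightarrow$) exactly as you do, by using the intrinsic nature of the hyperelliptic degree-two map (obtained via the canonical morphism and the $(g-1)$-uple embedding) to descend the isomorphism of curves to an isomorphism of the projective lines carrying $R$ to $\tilde{R}$. Your worry about descending to a \emph{single} $A$ is handled just as you say, by the uniqueness of the $g^1_2$ (equivalently the centrality of the hyperelliptic involution) for $g\geq 2$, and your explicit ($\Leftarrow$) direction, lifting $A$ to the double covers via uniqueness of the square root of the branch divisor's line bundle on $\mathbb{P}^1_{\overline{K}}$, is the same easy step the paper leaves implicit.
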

It is easy to verify the existence of $A$ in Proposition~\ref{6.1}. After a finite extension of $K$, we may suppose that 
$P(x)=\prod _{r\in R}(x-r)$ where $R\subset K$ has cardinality $2g+2$. Then one defines $A$ by, say, $A$ maps three 
distinct elements $r_1,r_2,r_3$ of $R$ to  1,2,3. 
Then $f$ is semi-autonomous if and only if
   $A(R)\subset \mathbb{P}^1(\mathbb{C})$.

  Suppose that $f$ is semi-autonomous. Then one computes on the field $K(X)$, which is identified
  with $K(y,x)$ with $y^2=v\in \mathbb{C}[x]$ (as above), the action of the differentiation $D$. Then $f$ is strictly equivalent to an autonomous equation if and only if $D(x)\in \mathbb{C}(x,y)$.\\

{\it Testing strict equivalence}.\\
 An algorithm for testing strict equivalence between two equations $f_1$ and $f_2$ of genus $g\geq 2$ can be obtained in a similar way. After a finite extension $K$ of $\mathbb{C}(z)$, the two fields are given for $j=1,2$ by equations $y^2=\prod _{r\in R_j} (x-r)$ where $R_1,R_2$ are subsets of $\mathbb{P}^1(K)$ of cardinality
$2g+2$. The two fields are isomorphic if and only if some automorphism $A$ of $\mathbb{P}^1_K$ which maps 
three chosen elements of $R_1$ to three chosen elements of $R_2$, has the property $A(R_1)=R_2$. 
 
  If $A$ exists then we identify the two fields with the field corresponding
to $y^2=\prod _{r\in R}(x-r)$, where $R\subset \mathbb{P}^1(K)$ consists of $2g+2$ elements. The automorphism group of this field is very explicit. It is generated by the involution $y\mapsto -y,\ x\mapsto x$ and the finite group of the  automorphism of $\mathbb{P}^1_K$ preserving the set $R$.

 Let $D_1,D_2$ denote the two derivations of this field coming from the equations $f_1,f_2$.   Then $f_1$ and $f_2$ are strict equivalent if and
only if there exists an automorphism $A$ with $D_2=AD_1A^{-1}$.

\begin{example} The ``standard'' autonomous equation for a genus two curve is $$ (y')^2-\prod_{1}^{6}(y-a_j) =0, $$ where the $a_j$ are distinct elements of $\mathbb{C}$. The function field of the equation (over $\mathbb{C}(z)$) is $ F=\mathbb{C}(z)(s,t),$ with equation $s^2-\prod_{1}^{6}(t-a_j) =0,$ and the differentiation is given by $t'=s$ and $s'=\frac{1}{2}\frac{dP}{dt},$ where $P(t)=\prod_{1}^{6}(t-a_j).$

One can put the standard equation in disguise by choosing a $T\in F$ such that $F=\mathbb{C}(z)(S,T),$ with $S=T'.$

\noindent (a). The choice $T=f\cdot t$ with $f\in\mathbb{C}(z)^*$ leads to the equations
$$ f^2(fS-f'T)^2- \prod_{1}^{6}(T-fa_j)=0.$$
For example, if we take $f=z^{-1}$ then the equation above becomes $$ (zS+T)^2- \prod_{1}^{6}(zT-a_j)=0. $$

\noindent (b). The choice $T=f\cdot s$ with $f\in\mathbb{C}(z)^*$ is also possible. First we consider the case $f=1$. Then $\mathbb{C}(s,s')=\mathbb{C}(s,t)$. Indeed, we have $s^2=P,$ $\frac{dP}{dt}\in\mathbb{C}(s,s')$ and $\mathbb{C}(P,\frac{dP}{dt})=\mathbb{C}(t).$

Let $G\in\mathbb{C}[X_1,X_2]$ be the irreducible polynomial satisfying
\begin{equation*}G(s,s')=0. \label{eq:autonomous} \tag{$\star$}
\end{equation*}
Then this is another autonomous equation for our genus two field $\mathbb{C}(s,t).$

Now the choice $T=f.s$ and $T'=S$ produces the order one differential equation

$$ G\left(\frac{S-\frac{f'}{f}T}{f},\frac{T}{f}\right)=0. $$

We can make the equation (\ref{eq:autonomous}) explicit as follows. There is a rational function $Q(X_1,X_2)\in\mathbb{C}(X_1,X_2)$ such that $t=Q(P,\frac{1}{2}\frac{dP}{dt}).$ The the equation $G$ between $s'$ and $s$ is obtained from $s^2=P(t)=P(Q(s^2,s')).$

As an example, consider $s^2=P(t)=t^6-1$. Then we get $t=\frac{6P}{\frac{dP}{dt}},$ and therefore the equation  $s^2=t^6-1$ becomes $$ G(s,s')=s^2(s')^6-6(s^2)^6+(s')^6=0.$$

\end{example}

\section{Genus three and  non hyperelliptic}

{\it Testing strict equivalence}.\\
Suppose that the order one differential equations $f_1,f_2$ define 
the pairs $(X_j,D_j),\ j=1,2$ consisting of a genus 3 curve over $K$
which is not hyperelliptic and a $\mathbb{C}$-linear derivation  on the function field of this curve, satisfying $D_j(z)=1$.   

First one wants to investigate whether
the curves become isomorphic after a finite extension of $K$. The curve
$X_j$ has a canonical embedding as a smooth curve  in $\mathbb{P}^2_K$, given by a homogeneous polynomial $F_j$
 of degree 4 (unique up to constants).  Further $\overline{K}\times_K X_1$ is isomorphic to $\overline{K}\times_K X_2$ if and only if there exists an automorphism $A$ of  $\mathbb{P}^2_{\overline{K}}$ such that $F_2=AF_1$ (up to constants).\\

 Let $Z$ denote the variety of the homogeneous polynomials of degree 4 (up to multiplication by constants and defining a smooth curve). On this variety
the group ${\rm PGL}_3$ acts in a natural way.  The naive quotient $Z/{\rm PGL}_3$
does not exist. However, one can compute generators $\{I_k\}$ for the ring of the ${\rm PGL}_3$-invariant homogeneous functions on $Z$. This defines a `coarse moduli space'.
For a base field  which is algebraically closed (in our case $\overline{K}$), two 
degree 4 homogeneous polynomials $F_1,F_2$ are equivalent 
(up to constants)  under ${\rm PGL}_3$ if and only if the basic invariants $\{I_k\}$ have the same values in $F_1$ and $F_2$. An explicit computation of the $\{I_k\}$ is given in \cite{R}. Moreover, this
thesis contains an algorithm which produces $A$ with $F_2=AF_1$ (up to constants), whenever $ I_k(F_1)=I_k(F_2)$ for all $k$.\\

Another interesting method testing whether $X_1,X_2$ become isomorphic over an extension of $K$ can be deduced from \cite{F}, Proposition 1.1. The idea is that one provides the
smooth degree 4 curves with an additional structure  such that the 
new space $Z^+$, consisting of these curves with extra structure, admits a good quotient by ${\rm PSL}_3$ (which is  a `fine moduli space' for the problem 
considered here).

 This additional structure consists, for a degree 4 smooth
curve $X\subset \mathbb{P}^2$, of two bitangents $L_1,L_2$ and the tangent points
$A_1,B_1$ on $L_1$ and the tangent points $A_2,B_2$ on $L_2$. The map
$Z^+\rightarrow Z$, which forgets the extra structure, is finite surjective and  has degree 
$28\times 27\times 2\times 2$, since a smooth degree 4 curve has 28 bitangents.

Now computing a possible isomorphism $\overline{K}\times _KX_1\rightarrow   
\overline{K}\times _KX_2$ can be done as follows. Let $F_1$ be the equation of the embedded $X_1$ and (after an extension of $K$) we choose 
$(L_1,L_2,A_1,B_1,A_2,B_2)$. Let $F_2$ be the equation for $X_2$ and consider any of the possible tuples $(L^*_1,L^*_2,A^*_1,B^*_1,A^*_2,B^*_2)$  for $F_2$. Let 
$\phi \in {\rm PGL_3}$ be the unique transformation $\phi$ which $(A_1,B_1,A_2,B_2)\mapsto  (A^*_1,B^*_1,A^*_2,B^*_2)$.  One computes whether
 $\phi F_1=F_2$ (up to scalars).  If this has no success for any of the possible tuples,
then   $\overline{K}\times _KX_1$ is not isomorphic to  $\overline{K}\times _KX_2$. 

Suppose now that $X_1$ can be identified with $X_2$ (after replacing $K$ by a finite 
extension).  Then for strict equivalence
one has to test whether $D_2=AD_1A^{-1}$ holds for some element $A$ in the known
finite group of automorphisms of $X_1=X_2$ (again possibly extending $K$).\\

{\it Testing strict equivalence  to a (semi-)autonomous equation}.\\
Let $(X,D)$ denote a curve of genus 3 over $K$ which is not hyperelliptic
and $D$  a derivation of the function field of $X$ such that $D(z)=1$.
As before, $X$ yields a homogeneous polynomial $F$ of degree 4.
The curve is semi-autonomous if and only if the values of the invariants $I_k$ for $F$ are in $\mathbb{C}$. If $X$ is semi-autonomous, then
according to \cite{R}, there is an algorithm producing 
$A\in {\rm PGL}_3(\overline{K})$ such that $F_0:=A(F)$ has its coordinates in $\mathbb{C}$. The homogeneous polynomial  $F_0$
of degree 4  defines a curve $X_0$ over $\mathbb{C}$ such that $\overline{K}\times _KX\cong \overline{K}\times _\mathbb{C}X_0$. Then $(X,D)$ is strictly equivalent to an autonomous equation if and only if $D$ leaves the function field of $X_0$ invariant. 

\section{Non hyperelliptic curves of higher genus}
 Let, as before, the pair $(X,D)$ correspond to an order one differential
equation. Suppose that $X/K$ has genus $g\geq 3$ and that $X$
is not hyperelliptic. Testing strict equivalence and equivalence to a
(semi-)autonomous equation can be done as in \S 6  if one has a reasonable  explicit  (coarse or fine) moduli space for non hyperelliptic curves of genus $g$ and a way of determining the finite group of
automorphisms of a given curve of this type. \\

 The following method shows the existence of an algorithm based upon properties of the  Weierstrass 
points of a curve. Let $X_j,\ j=1,2$ denote non hyperelliptic curves of genus 
$g\geq 3$ over $K$. The canonical embeddings 
$X_j\subset \mathbb{P}^{g-1}_K$ are explicit. Let 
$W_j\subset \overline{K}\times _KX_j$ denote the finite (and effectively computable) set of
Weierstrass points. After a finite extension of $K$ we may suppose that the points of $W_1,W_2$
are $K$-rational. An isomorphism $\phi :\overline{K}\times X_1
\rightarrow \overline{K}\times X_2$ is induced by a unique 
automorphism $\psi$ of $\mathbb{P}^{g-1}_{{K}}$ which maps $W_1$ to $W_2$.  There are only finitely many 
automorphisms $\psi$ such that $\psi (W_1)=W_2$. One can test these $\psi$'s for the properties $\psi (X_1)=X_2$
and  ${\psi^*}^{-1}D_2\psi^*=D_1$. This yields an algorithm as desired.

\section*{Acknowledgements} A preliminary  version of this paper was written while the first three authors were guests of Vietnam Institute for Advanced Study in Mathematics (VIASM) in Ha Noi. They thank VIASM for its hospitality.
We thank Frans Oort for his interest in this work.

\end{document}